   \theoremstyle{plain}%default
   \newtheorem{thm}{Theorem}[section]
   \newtheorem{prop}[thm]{Proposition}
   \newtheorem{lem}[thm]{Lemma}
   \newtheorem{cor}[thm]{Corollary}
   \theoremstyle{definition}
   \newtheorem{defn}[thm]{Definition}
   \newtheorem{example}[thm]{Example}
   \theoremstyle{remark}
 \numberwithin{equation}{section}
\author{V. Manuilov}
\date{}
\address{Moscow Center for Fundamental and Applied Mathematics {\rm and} Moscow State University,
Leninskie Gory 1, Moscow, 
119991, Russia}
\email{manuilov@mech.math.msu.su}
\title[Inverse semigroup from metrics on doubles III]{Inverse semigroup from metrics on doubles III. Commutativity and (in)finiteness of idempotents}
\begin{document}

\begin{abstract}
We have shown recently that, given a metric space $X$, the coarse equivalence classes of metrics on the two copies of $X$ form an inverse semigroup $M(X)$. Here we study the property of idempotents in $M(X)$ of being finite or infinite, which is similar to this property for projections in $C^*$-algebras. We show that if $X$ is a free group then the unit of $M(X)$ is infinite, while if $X$ is a free abelian group then it is finite. As a by-product, we show that the inverse semigroup $M(X)$ is not a quasi-isometry invariant. More examples of finite and infinite idempotents are provided. We also give a geometric description of spaces, for which their inverse semigroup $M(X)$ is commutative.

\end{abstract}

\maketitle

\section{Introduction}

Given metric spaces $(X,d_X)$ and $(Y,d_Y)$, a metric $d$ on $X\sqcup Y$ that extends the metrics $d_X$ on $X$ and $d_Y$ on $Y$, depends only on the values $d(x,y)$, $x\in X$, $y\in Y$, and it may be not easy to check which functions $d:X\times Y\to (0,\infty)$ determine a metric on $X\sqcup Y$. The problem of description of all such metrics is difficult due to the lack of a nice algebraic structure on the set of metrics, but, passing to coarse equivalence of metrics, we get an algebraic structure, namely, that of an inverse semigroup \cite{M}. Recall that two metrics, $b,d$, on a space $Z$ are coarsely equivalent if there exist monotone functions $\varphi,\psi:[0,\infty)\to[0,\infty)$ such that 
$$
\lim\nolimits_{t\to\infty}\varphi(t)=\lim\nolimits_{t\to\infty}\psi(t)=\infty
$$ 
and 
$$
\varphi(d(z_1,z_2))\leq b(z_1,z_2)\leq\psi(d(z_1,z_2))
$$ 
for any $z_1,z_2\in Z$. A coarse equivalence class of a metric $d$ we denote by $[d]$.

Let $\mathcal M(X,Y)$ denote the set of all metrics $d$ on $X\sqcup Y$ such that
\begin{itemize}
\item
the restriction of $d$ onto $X$ and $Y$ are $d_X$ and $d_Y$ respectively;
\item
$\inf_{x\in X,y\in Y}d(x,y)>0$. 
\end{itemize}

Coarse equivalence classes of metrics in $\mathcal M(X,Y)$ can be considered as morphisms from $X$ to $Y$ \cite{Manuilov-Morphisms}, where the composition $\rho d$ of a metric $d$ on $X\sqcup Y$ and a metric $\rho$ on $Y\sqcup Z$ is given by the metric determined by 
$$
(\rho\circ d)(x,z)=\inf\nolimits_{y\in Y}[d(x,y)+\rho(y,z)],\quad x\in X,\ z\in Z. 
$$
When $Y=X$, we call $X\sqcup X$ the double of $X$. In what follows we identify $X\sqcup X$ with $X\times\{0,1\}$, and write $X$ for $X\times\{0\}$ (resp., $x$ for $(x,0)$) and $X'$ for $X\times\{1\}$ (resp., $x'$ for $(x,1)$).

The main result of \cite{M} is that the semigroup $M(X)$ (with respect to this composition) of coarse equivalence classes of metrics on the double of $X$ is an inverse semigroup with the unit element ${\mathbf 1}$ and the zero element ${\mathbf 0}$, and the unique pseudo-inverse for $[d]\in M(X)$ is the coarse equivalence class of th metric $d^*$ given by $d^*(x,y')=d(x',y)$, $x,y\in X$.

Recall that a semigroup $S$ is an inverse semigroup if for any $s\in S$ there exists a unique $t\in S$ (denoted by $s^*$ and called a pseudo-inverse) such that $s=sts$ and $t=tst$ \cite{Lawson}. Philosophically, inverse semigroups describe local symmetries in a similar way as groups describe global symmetries, and technically, the construction of the (reduced) group $C^*$-algebra of a group generalizes to that of the (reduced) inverse semigroup $C^*$-algebra \cite{Paterson}. It is known that any two idempotents in an inverse semigroup $S$ commute, and that there is a partial order on $S$ defined by $s\leq t$ if $s=ss^*t$. Our standard reference for inverse semigroups is \cite{Lawson}.

Close relation between inverse semigroups and $C^*$-algebras allows to use classification of projections in $C^*$-algebras for idempotents in inverse semigroups. Namely, as in $C^*$-algebra theory, we call two idempotents, $e,f\in E(S)$ von Neumann equivalent (and write $e\sim f$) if there exists $s\in S$ such that $s^*s=e$, $ss^*=f$. An idempotent $e\in E(S)$ is called {\em infinite} if there exists $f\in E(S)$ such that $f\preceq e$, $f\neq e$, and $f\sim e$. Otherwise $e$ is {\em finite}. An inverse semigroup is {\em finite} if every idempotent is finite, and is {\em weakly finite} if it is unital and the unit is finite. A commutative inverse semigroup is patently finite.

In \cite{M2} we gave a geometric description of idempotents in the inverse semigroup $M(X)$ (there are two types of idempotents, named type I and type II) and showed in Lemma 3.3 that the type is invariant under the von Neumann equivalence.  
In the first part of this paper, we study the property of weak finiteness for $M(X)$ (i.e. finiteness of the unit element) and discuss its relation to geometric properties of $X$. 

We start with several examples of finite or infinite idempotents, and then show that if $X$ is a free group then $M(X)$ is not weakly finite, while if $X$ is a free abelian group then it is weakly finite. We also show that the inverse semigroup $M(X)$ is not a quasi-isometry invariant. The property of being weakly finite is also not a coarse invariant. We don't know if it is a quasi-isometry invariant.

In the second part of this paper, we give a geometric description of spaces, for which the inverse semigroup $M(X)$ is commutative.

\part{Weak finiteness of $M(X)$}

\section{Some examples}

The following example shows that in $M(X)$, for an appropriate $X$, we can imitate examples of partial isometries and projections in a Hilbert space.  

\begin{example}
Let $l^1(\mathbb N)$ be the space of infinite $l^1$ sequences, with the metric given by the $l^1$-norm, and let 
$$
X_n=\{(0,\ldots,0,t,0,\ldots):t\in[0,\infty)\}
$$ 
with $t$ at the $n$-th place, $n\in\mathbb N$. Set $X=\cup_{n\in\mathbb N}X_n\subset l^1(\mathbb N)$.

Let $x=(0,\ldots,0,t,0,\ldots)\in X_n$, $y=(0,\ldots,0,s,0,\ldots)\in X_m$.
Define metrics $d$, $e$, $f$ on $X\sqcup X'$ by 
\begin{eqnarray*}
d(x,y')&=&\left\lbrace\begin{array}{cl}|s-t|+1,&\mbox{if\ }m=n+1;\\
s+t+1,&\mbox{if\ }m\neq n+1,\end{array}\right.\\
e(x,y')&=&\left\lbrace\begin{array}{cl}|s-t|+1,&\mbox{if\ }m=n;\\
s+t+1,&\mbox{if\ }m\neq n,\end{array}\right.\\
f(x,y')&=&\left\lbrace\begin{array}{cl}|s-t|+1,&\mbox{if\ }m=n\geq 2;\\
s+t+1,&\mbox{if\ }m\neq n \mbox{\ or\ }m=n=1.\end{array}\right.
\end{eqnarray*}

It is easy to see that $d$, $e$, $f$ are metrics on $X\sqcup X'$, and that $d^*d=e$, $dd^*=f$ are idempotents, and that $e={\mathbf 1}\neq f$. In particular, ${\mathbf 1}$ is infinite. Although $d$ seems similar to a one-sided shift in a Hilbert space, it behaves differently: $f$ is orthogonally complemented, i.e. there exists $h$ such that $f\lor h={\mathbf 1}$, $f\land h={\mathbf 0}$, but the complement is not a minimal idempotent, i.e. there exists a lot of idempotents $j\in E(M(X))$ such that $j\leq h$, $j\neq h$. 

\end{example}

On the other hand, if $X\subset [0,\infty)$ with the standard metric then the inverse semigroup $M(X)$ is commutative (Prop. 7.1 in \cite{M}), hence any idempotent can be equivalent only to itself, hence is finite. In Part 2, we shall give a geometric description of all metric spaces with commutative $M(X)$, which is patently finite.

%\section{An example}

The next example shows that the picture may be more complicated.

\begin{thm}\label{example-amenable}
There exists an amenable space $X$ of bounded geometry and $s\in M(X)$ such that $s^*s=\mathbf 1$, but $ss^*\neq \mathbf 1$. 

\end{thm}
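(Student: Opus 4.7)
The plan is to construct an explicit witness to the claim, namely a specific amenable bounded-geometry space $X$ together with a metric $d$ on $X\sqcup X'$ such that $[d]^*[d] = \mathbf 1$ and $[d][d]^* \neq \mathbf 1$.

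\emph{Construction of $X$.} We will take $X$ to be a disjoint union $X = \bigsqcup_{n \geq 1} X_n$ of ``strands'', each a ray $\{0,1,2,\ldots\}$ or a long finite interval. The strands will be arranged so that inter-strand distances grow with the indices, ensuring bounded geometry (balls of fixed radius eventually lie in single strands), while the whole space admits a F\o lner exhaustion by the individual strands (boundary-to-volume ratio tending to zero), ensuring amenability.

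\emph{Definition of $d$.} Following the blueprint of Example~2.1, we will define $d$ on $X \sqcup X'$ by pairing corresponding positions on consecutive strands $X_n$ and $X_{n+1}$ at $d$-distance close to $1$, while the remaining pairs $(x, y')$ will receive a larger value $d(x, y')$ chosen so that $d$ is a metric in $\mathcal M(X, X')$ and so as to create the required asymmetry between $d^*d$ and $dd^*$.

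\emph{Checking $s^*s = \mathbf 1$.} We will compute $d^*d(x, z') = \inf_y [d(x, y') + d(z, y')]$. Each $x \in X_n$ will have a ``forward partner'' $y \in X_{n+1}$ with $d(x, y') \leq 1$, so $d^*d(x, x') \leq 2$; more generally, choosing $y$ to interpolate between $x$ and $z$ inside the next strand yields $d^*d(x, z') \leq d_X(x,z) + O(1)$, matching the unit metric up to coarse equivalence.

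\emph{Checking $ss^* \neq \mathbf 1$.} We will compute $dd^*(x, z') = \inf_y[d(y, x') + d(y, z')]$. A ``backward partner'' of $x \in X_n$ would need to live in $X_{n-1}$; for $x \in X_1$ no such partner exists. Provided the ``large'' formula for $d$ is genuinely large (strictly bigger than $d_X+1$) on the diagonal, we will obtain $dd^*(x, x') \to \infty$ as $x$ escapes to infinity inside $X_1$, so $dd^* \not\sim \mathbf 1$.

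\emph{Main obstacle.} The principal difficulty lies in the choice of the ``large'' formula for $d$: it must (i) make $d$ a valid metric compatible with the separated (bounded-geometry) arrangement of the strands, (ii) dominate the ``small'' formula on consecutive strands so that the shift pairing is unambiguous, and (iii) be genuinely large on the diagonal $d(x,x')$ for $x \in X_1$ so that $dd^*$ blows up there. In the bouquet-of-rays example of Example~2.1 all three requirements are met simultaneously by the single choice $d(x, y') = s+t+1$, because the rays share a common origin and the intrinsic $\ell^1$ metric between rays already has the form $s+t$; when one separates the strands (as is necessary for bounded geometry), this coincidence breaks, and the crux of the proof is to identify inter-strand distances and a compatible ``large'' formula that preserve these three properties jointly. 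Once such a pair $(X, d)$ is in hand, the remaining verifications are direct computations with infima, analogous to but more delicate than those of Example~2.1.
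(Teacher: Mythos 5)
Your proposal is a plan rather than a proof: the witness pair $(X,d)$ is never actually constructed. You correctly identify the three constraints that the construction must satisfy simultaneously, and you correctly observe that they come into conflict once the strands of Example~2.1 are separated to restore bounded geometry --- but you then stop exactly there, writing that ``the crux of the proof is to identify inter-strand distances and a compatible `large' formula that preserve these three properties jointly.'' That crux is the entire content of the theorem. The tension is real: for $ss^*\neq\mathbf 1$ you need points of $X_1$ arbitrarily far in $d_X$ from $\bigcup_{n\geq 2}X_n$ (since $dd^*(x,x')=2d(x',X)$ is controlled by $d_X(x,f(X))$, where $f$ is the pairing map), so the strands must diverge; for $d$ to be a metric and for $s^*s=\mathbf 1$ the pairing $X_n\to X_{n+1}$ must be a \emph{globally} uniformly rough isometry of $X$, which forces the divergence pattern to be shift-invariant in $n$; and for bounded geometry the rays cannot all pass near a common basepoint. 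No ambient space realizing all three is exhibited, so none of the claimed verifications (that $d$ satisfies the four triangle inequalities on the double, that $d(x',X)\to\infty$ on $X_1$, that $X$ is amenable and of bounded geometry) can even be started. Amenability in particular --- half of the statement --- gets one sentence and no argument.

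For comparison, the paper's proof uses a different architecture that sidesteps the strand problem entirely: $X=\{x_n\}\subset\ell^\infty$ with $x_n=(\log 2,\ldots,\log(n+1),0,\ldots)$, so $d_X(x_n,x_m)$ grows logarithmically and $X$ has asymptotic dimension zero; the second copy is realized by coordinate doubling, so the hidden rough isometry is $n\mapsto 2n$ (distortion at most $\log 2$) rather than a shift of rays, and the odd-indexed points escape from its image. Your architecture is in fact salvageable --- e.g.\ take $X_n=\{(n,t):t\in\mathbb N\}$ with $d_X((n,t),(m,s))=|t-s|$ for $n=m$ and $|t-s|+|n-m|+\log(1+\min(t,s))$ for $n\neq m$, for which the shift $f(n,t)=(n+1,t)$ is an isometry onto $\bigcup_{n\geq 2}X_n$ and one can set $d(x,y')=\inf_{u\in X}[d_X(x,u)+1+d_X(f(u),y)]$ --- but producing such a metric and checking it is precisely the step your proposal defers, so as written the argument has a genuine gap.
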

\begin{proof}
Let 
$$
x_n=(\log 2,\log 3,\ldots,\log n,\log (n+1),0,0,\ldots),
$$ 
$$
x'_n=(\log 2,\log 2,\log 3,\log 3,\ldots,\log n,\log n,\log (n+1),\log (n+1),0,0,\ldots),
$$
and let 
$$
X=\{x_n:n\in\mathbb N\}, \quad X'=\{x'_n:n\in\mathbb N\}, 
$$
$X,X'\subset l^\infty(\mathbb N)$ with the metric 
$$
d(x,y)=\sup\nolimits_{k}|x_k-y_k|,\quad x=(x_1,x_2,\ldots),\quad y=(y_1,y_2,\ldots). 
$$
Take $m>n$, then $d(x_n,x_m)=\log m=d(x'_n,x'_m)$, hence the restriction of $d$ onto the two copies of $X$ coincide (thus determining the metric $d_X$ on $X$), and $d\in\mathcal M(X)$. 

We have, for $n$ even, $n=2k$, 
\begin{eqnarray*}
d(x_n,X')&=&\inf\nolimits_{m\in\mathbb N}d(x_{2k},x'_m)=d(x_{2k},x'_k)\\
&=&\max\nolimits_{i\leq k}(|\log (i+1)-\log(2i+1)|\ \leq\  \log 2,
\end{eqnarray*}
and for $n$ odd, $n=2k-1$,
$$
d(x_{2k-1},x'_m)\geq \log(k+1)
$$
for any $m\in\mathbb N$, hence
$$
d(x_n,X')=\inf\nolimits_{m\in\mathbb N}d(x_{2k-1},x'_m)= d(x_{2k-1},x'_{k})=\log(k+1),
$$
i.e.
$$
\lim_{k\to\infty}d(x_{2k-1},X')=\infty.
$$
On the other hand,
\begin{eqnarray*}
d(x'_n,X)&=&\inf\nolimits_{m\in\mathbb N}d(x'_n,x_m)\leq d(x'_n,x_{2n})\\
&=&\log(2n+1)-\log(n+1)\ \leq\ \log 2.
\end{eqnarray*}

Let $X_+=\{x_{2k}:k\in\mathbb N\}$, $X_-=\{x_{2k-1}:k\in\mathbb N\}$.
Then  
\begin{eqnarray*}
d^*d(x,x')&=&\inf\nolimits_{y\in X}[d(x,y')+d^*(y,x')]=\inf\nolimits_{y\in X}[d(x,y')+d(x,y')]\\
&=&2d(x,X')\ \leq\ \log 2
\end{eqnarray*}
for any $x\in X_+$ and
$$
\lim_{x\in X_-;\ x\to\infty}d^*d(x,x')=\lim_{x\in X_-;\ x\to\infty}2d(x,X')=\infty,
$$
while
$$
dd^*(x,x')=2d(x',X)\leq\log 2
$$
for any $x\in X$.

Let $d_+, d_-\in\mathcal M(X)$ be the idempotent selfadjoint metrics defined by
$$
d_\pm(x,y')=\inf\nolimits_{u\in X_\pm}[d_X(x,u)+1+d_X(u,y)].
$$

Then $[d^*d]=[d_+]$, $[dd^*]=\mathbf 1$, and $[d_+]$ is strictly smaller than $\mathbf 1$, hence $M(X)$ is not finite.

Note that $X$ is amenable. Set $F_n=\{x_1,\ldots,x_n\}\subset X$. Let $N_r(A)$ denote the $r$-neighborhood of the set $A$. Then $N_r(F_n)\setminus F_n$ is empty when $\log(n+1)>r$, hence $\{F_n\}_{n\in\mathbb N}$ is a F\o lner sequence. 
For $r=\log m$, the ball $B_r(x_n)$ of radius $r$ centered at $x_n$ contains either no other points besides $x_n$ (if $n\geq m+1$), or it consists of the points $x_1,\ldots,x_m$ (if $n\leq m$), hence the metric on $X$ is of bounded geometry. In fact, this space is of asymptotic dimension zero.  

\end{proof}

\section{Case of free groups}

Let $X=\Gamma$ be a finitely generated group with the word length metric $d_X$. Consider the following property (I):

\begin{itemize}
\item[(i1)]
$X=Y\sqcup Z$, and for any $D>0$ there exists $z\in Z$ such that $d_X(z,Y)>D$;
\item[(i2)]
there exist $g,h\in\Gamma$ such that $gY\subset Y$, $hZ\subset Y$ and $gY\cap hZ=\emptyset$;
\item[(i3)]
there exists $C>0$ such that $|d_X(gy,hz)-d_X(y,z)|<C$ for any $y\in Y$, $z\in Z$.

\end{itemize}

The property (I) is neither stronger nor weaker than non-amenability. If we require additionally that $Y\sim Z$ then it would imply non-amenability.

\begin{lem}
The free group $\mathbb F_2$ on two generators satisfies the property (I).

\end{lem}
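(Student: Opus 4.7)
The plan is to exhibit a concrete partition $\mathbb F_2=Y\sqcup Z$ together with elements $g,h\in \mathbb F_2$ that verify all three conditions simultaneously. Writing $\mathbb F_2=\langle a,b\rangle$, I would take $Y$ to be the set of reduced words whose first letter is $a$, and $Z$ to be its complement, so $Z=\{e\}\cup\{\text{reduced words starting with }a^{-1},\ b,\text{ or }b^{-1}\}$; geometrically this amounts to removing the edge $e\leftrightarrow a$ of the Cayley tree and taking the two resulting half-trees. For (i1) I would take $z_n=b^n\in Z$: every $y\in Y$ has a unique reduced form $y=au$ with $u$ not beginning with $a^{-1}$, so the product $b^{-n}\cdot au$ involves no cancellation, and $|b^{-n}y|=n+1+|u|\geq n+1$ with equality when $u=e$. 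Hence $d_X(b^n,Y)=n+1\to\infty$.

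For (i2) I would take $g=a$ and $h=aba^{-1}$. The inclusion $gY\subset Y$ is immediate since $a\cdot(au)=a^2u$ still begins with $a$. The point of the particular choice of $h$ is that it ends in $a^{-1}$, while every $z\in Z$ either equals $e$ or begins with one of $a^{-1},b,b^{-1}$; consequently no cancellation occurs at the junction of $h$ and $z$, so $hz$ is the reduced concatenation and begins with the letter $a$, proving $hZ\subset Y$. Disjointness of $gY$ and $hZ$ then follows by reading off two-letter prefixes: every element of $gY$ begins with $aa$, whereas every element of $hZ$ begins with $ab$.

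For (i3), left-invariance of the word metric gives $d_X(gy,hz)=|y^{-1}g^{-1}hz|=|y^{-1}\cdot ba^{-1}\cdot z|$, which I would compare with $d_X(y,z)=|y^{-1}z|$. The same no-cancellation principle is the crux: $y\in Y$ forces $y^{-1}$ to end in $a^{-1}$, and $z\in Z$ forces $z$ either to equal $e$ or to begin with a letter other than $a$, so both $y^{-1}z$ and $y^{-1}ba^{-1}z$ are fully reduced concatenations whose lengths differ by exactly $|ba^{-1}|=2$. Hence (i3) holds with $C=3$. The main obstacle throughout is really the verification of (i2): naive candidates such as $h=a$ or $h=ab$ admit cancellations with elements of $Z$ that can eject $hz$ from $Y$, and the asymmetric choice $h=aba^{-1}$ is engineered precisely so that its terminal $a^{-1}$ never cancels against $Z$. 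Once (i2) is secured this way, the length comparison in (i3) collapses to a constant difference automatically.
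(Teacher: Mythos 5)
Your proof is correct and follows essentially the same strategy as the paper's: partition $\mathbb F_2$ according to the first letter of reduced words, and choose $g,h$ so that every relevant concatenation is already reduced, which makes (i2) immediate and collapses (i3) to a constant length difference. The paper's concrete choices differ slightly ($Y=\{$reduced words beginning with $a$ or $a^{-1}\}$, $g=ab$, $h=a^2$), but the no-cancellation mechanism is identical to yours.
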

\begin{proof}
Let $a$ and $b$ be the generating elements of $\mathbb F_2$, and let $Y\subset X$ be the set of all reduced words in $a$, $a^{-1}$, $b$ and $b^{-1}$ that begin with $a$ or $a^{-1}$, $Z=X\setminus Y$. Let $g=ab$, $h=a^2$. Clearly, $gY\subset Y$ and $hZ\subset Y$. 

If $z$ begins with $a^n$, $n>D$, then $d_X(z,Y)\geq n$.

If $y\in Y$, $z\in Z$ then 
$$
d_X(aby,a^2z)=|y^{-1}b^{-1}a^{-1}a^2z|=|y^{-1}b^{-1}az|=|y^{-1}z|+2=d_X(y,z)+2,
$$ 
as the word $y^{-1}b^{-1}az$ cannot be reduced any further ($y^{-1}$ ends with $a^{\pm}$, and $z$ either begins with $b^{\pm}$, or is an empty word).

\end{proof}

\begin{thm}
Let $X=\Gamma$ be a group with the property (I). Then $X$ is not weakly finite. 

\end{thm}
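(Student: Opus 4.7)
The plan is to exhibit $s = [d] \in M(X)$ with $s^*s = \mathbf{1}$ and $f := ss^* \neq \mathbf{1}$; since $\mathbf{1}$ is the top idempotent, $f \preceq \mathbf{1}$ is automatic, and the witness $s$ gives $f \sim \mathbf{1}$, so $f \neq \mathbf{1}$ forces $\mathbf{1}$ to be infinite and $M(X)$ not weakly finite. I first use (i2) to form the partial shift $\phi\colon X \to X$ with $\phi(y) = gy$ for $y \in Y$ and $\phi(z) = hz$ for $z \in Z$. It is injective, its image $\phi(X) = gY \sqcup hZ$ is contained in $Y$, and since left multiplication on $\Gamma$ is a $d_X$-isometry, $\phi$ is isometric on each of $Y$ and $Z$ separately; (i3) bounds the mixed-type distortion by the constant $C$.

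The naive rule $d_X(\phi(x), y) + K$ is not a metric because $\phi$ is not globally $1$-Lipschitz, so I replace it by its $1$-Lipschitz hull: for $K \geq C/2$ set
\[
  d(x, y') \;=\; K + \inf_{u \in X}\bigl[\,d_X(\phi(u), y) + d_X(x, u)\,\bigr].
\]
This is manifestly $1$-Lipschitz in both variables with respect to $d_X$, so the triangle inequalities of ``vary one variable at a time'' type hold automatically. The remaining inequalities $d_X(x_1, x_2) \leq d(x_1, y') + d(x_2, y')$ and its $y$-analogue reduce, upon combining two infima and invoking (i3) in the form $d_X(\phi(u_1), \phi(u_2)) \geq d_X(u_1, u_2) - C$, to $d(x_1, y') + d(x_2, y') \geq 2K + d_X(x_1, x_2) - C$, which is valid once $K \geq C/2$. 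Hence $d \in \mathcal{M}(X)$.

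For $s^*s = \mathbf{1}$: in $d^*d(x_1, x_2') = \inf_y [d(x_1, y') + d(x_2, y')]$, the test value $y = \phi(x_1)$ (using $u = x_1$ in both infima) gives the upper bound $\leq 2K + d_X(x_1, x_2)$, while the matching lower bound $\geq 2K + d_X(x_1, x_2) - C$ is the estimate above. Thus $d^*d$ is coarsely the standard metric, i.e.\ $[d^*d] = \mathbf{1}$. For $ss^* \neq \mathbf{1}$: since $d_X(\phi(u), x) \geq d_X(x, \phi(X))$ for every $u \in X$, one has $d(y, x') \geq K + d_X(x, \phi(X))$ uniformly in $y$, so
\[
  dd^*(x_1', x_2') \;\geq\; 2K + d_X(x_1, \phi(X)) + d_X(x_2, \phi(X)).
\]
Given any $D > 0$, (i1) supplies $z_D \in Z$ with $d_X(z_D, Y) > D$; since $\phi(X) \subset Y$ this forces $dd^*(z_D', z_D') \geq 2K + 2D \to \infty$. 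The unit $\mathbf{1}$ is represented by the standard cross metric, whose diagonal $\mathbf{1}(z', z')$ is bounded uniformly, so $[dd^*] \neq \mathbf{1}$.

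The main obstacle I expect is the metric-verification in the second paragraph: the coarse class I want to implement---shift by $\phi$---cannot be realised by the first-guess formula, precisely because $\phi$ is a partial isometry only coarsely, with deficit exactly the $C$ from (i3), and the Lipschitz hull above is the cleanest honest representative in $\mathcal M(X)$. Once $d$ is in hand, the two idempotent computations and the appeal to (i1) are routine bookkeeping, with (i3) absorbing distortion as additive constants throughout.
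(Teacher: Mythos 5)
Your proposal is correct and follows essentially the same route as the paper: the same partial shift $\phi$ built from $g,h$ via (i2), the same infimum-type metric $d(x,y')=K+\inf_u[d_X(x,u)+d_X(\phi(u),y)]$ (the paper takes $K=C$), the same computations showing $[d^*d]=\mathbf 1$ while $dd^*(x,x')\geq 2K+2d_X(x,Y)$ is unbounded by (i1). Your verification of the triangle inequalities is in fact more explicit than the paper's, which leaves it as ``easy to check.''
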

\begin{proof}
We shall prove that there exists $d\in\mathcal M(X)$ such that $[d^*d]=\mathbf 1$ and $[dd^*]\neq\mathbf 1$.

Let $X=Y\sqcup Z$, $g,h\in\Gamma$ satisfy the conditions of the property (I).
Define a map $f:X\to X$ by setting 
$$
f(x)=\left\lbrace \begin{array}{cl}gx,&\mbox{if\ }x\in Y;\\
hx,&\mbox{if\ }x\in Z.
\end{array}\right.
$$ 

The maps $f|_Y$ and $f|_Z$ are left multiplications by $g$ and $h$, respectively, hence are isometries. If $y\in Y$, $z\in Z$ then (i3) holds for some $C>0$, hence $|d_X(f(x),f(y))-d_X(x,y)|<C$ holds for any $x,y\in X$.

Set 
$$
d(x,y')=\inf\nolimits_{u\in X}[d_X(x,u)+C+d_X(f(u),y)]. 
$$
It is easy to check that $d$ satisfies all triangle inequalities, hence $d$ is a metric, $d\in\mathcal M(X)$. Then 
$$
d^*d(x,x')=2d_X(x,X')=2\inf\nolimits_{u,y\in X}[d_X(x,u)+C+d_X(f(u),y)]=2C
$$
for any $x\in X$, and
\begin{eqnarray*}
dd^*(x,x')&=&2d_X(x',X)=\inf\nolimits_{z,u\in X}[d_X(z,u)+C+D_X(f(u),x)]\\
&=&2(C+d_X(f(X),x))\ \geq\ 2C+2d_X(Y,x)
\end{eqnarray*}
is not bounded. Thus, $[d^*d]=\mathbf 1$, while $[dd^*]\neq\mathbf 1$.

\end{proof}

\section{Case of abelian groups}

A positive result is given by the following Theorem.
\begin{thm}
Let $X=\mathbb R^n$, with a norm $\|\cdot\|$, and let the metric $d_X$ be determined by the norm $\|\cdot\|$. If $s\in M(X)$, $s^*s=\mathbf 1$ then $ss^*=\mathbf 1$. 

\end{thm}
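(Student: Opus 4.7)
The plan is to translate the algebraic condition $s^*s = \mathbf{1}$ into geometric data, specifically a coarse self-map $\pi$ of $X = \mathbb{R}^n$, and then to exploit the rigid large-scale geometry of $\mathbb{R}^n$ to force $\pi$ to be coarsely surjective; this is precisely what $ss^* = \mathbf{1}$ asserts.

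Pick a representative $d \in \mathcal{M}(X)$ of $s$. From $[d^*d] = \mathbf{1}$ together with $d^*d(x, x') = 2\inf_{y \in X} d(x, y')$, one extracts a constant $C > 0$ such that $d(x, X') \leq C$ for every $x$, and hence a map $\pi \colon X \to X$ with $d(x, \pi(x)') \leq C$. The triangle inequality on $(X \sqcup X', d)$ routed through the auxiliary point $\pi(x)' \in X'$, together with $d|_{X'} = d_X$, gives the two-sided bound $|d(x, y') - d_X(\pi(x), y)| \leq C$. Since any two points of $\mathbb{R}^n$ are joined by a geodesic, one has $\inf_z[d_X(a,z) + d_X(b,z)] = d_X(a,b)$, and infimizing in the definition of $d^*d$ through this then yields $|d^*d(x, y') - d_X(\pi(x), \pi(y))| \leq 2C$. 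The coarse equivalence $[d^*d] = \mathbf{1}$ thus says that $d_X(\pi(x), \pi(y))$ is coarsely equivalent to $d_X(x, y)$, i.e., $\pi \colon \mathbb{R}^n \to \mathbb{R}^n$ is a coarse embedding with monotone controls $\varphi, \psi$ tending to infinity on both sides.

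The main step is to show that every such coarse embedding $\pi \colon \mathbb{R}^n \to \mathbb{R}^n$ is coarsely surjective. I would argue by contradiction: if balls $B(y_k, R_k)$ with $R_k \to \infty$ are disjoint from $\pi(X)$, then rescaling the metric by $\lambda_k = R_k$ and passing to an ultralimit based at $y_k$ collapses both source and target of $\pi$ onto $\mathbb{R}^n$ itself (the asymptotic cone of the Banach space $(\mathbb{R}^n, \|\cdot\|)$ being the space itself), while $\pi$ descends to a continuous distance-preserving injection $\mathbb{R}^n \to \mathbb{R}^n$ whose image avoids an open unit ball around the new basepoint. Invariance of domain then forces the image to be open, hence clopen, hence all of $\mathbb{R}^n$, contradicting the avoided ball. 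Granted coarse surjectivity with $\sup_y d_X(y, \pi(X)) \leq C'$, one picks $x_0$ with $d_X(y, \pi(x_0)) \leq C'$ and concludes $d(y', x_0) \leq d_X(y, \pi(x_0)) + d(\pi(x_0)', x_0) \leq C' + C$, so $d(y', X)$ is uniformly bounded, which is $[dd^*] = \mathbf{1}$.

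The main obstacle is the asymptotic-cone reduction: for a coarse embedding with merely monotone (as opposed to linear) controls $\varphi, \psi$, the map induced on the ultralimit is not automatically well-defined as an isometric embedding, since $\lambda^{-1} \psi(\lambda t)$ can blow up if $\psi$ is superlinear. Circumventing this likely requires first promoting $\pi$ to a genuine quasi-isometry using the tight polynomial volume growth of $\mathbb{R}^n$, or replacing the cone argument by a direct effective net/volume counting estimate that exploits the homogeneity of $\mathbb{R}^n$.
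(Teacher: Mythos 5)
Your reduction of the problem is the same as the paper's: from $[d^*d]=\mathbf 1$ you extract $C>0$ with $d(x,X')<C$ for all $x$, hence a map $\pi:X\to X$ with $d(x,\pi(x)')<C$, and the whole theorem becomes the assertion that $\pi$ is coarsely surjective. The genuine gap is the one you flag yourself at the end, but your diagnosis of it is off. You obtain control on $d_X(\pi(x),\pi(y))$ by routing through the coarse equivalence $[d^*d]=\mathbf 1$, which only yields monotone controls $\varphi,\psi$, and then you correctly observe that an ultralimit of maps with superlinear upper control need not exist. But you do not need $[d^*d]=\mathbf 1$ at this stage at all: since $d$ restricted to $X'$ is $d_X$, the triangle inequality through $\pi(x)'$ and $\pi(y)'$ gives directly
$$
|d_X(\pi(x),\pi(y))-d_X(x,y)|\leq d(x,\pi(x)')+d(y,\pi(y)')\leq 2C,
$$
i.e.\ $\pi$ is an \emph{almost isometry} with a uniform additive error. (This is exactly the bound the paper records for its map $f$.) With that, the rescaled error $2C/\lambda_k\to 0$, the ultralimit map is a genuine isometric embedding of the cone $(\mathbb R^n,\|\cdot\|)$ into itself, and your invariance-of-domain argument (open image, closed image by completeness, connectedness) closes the contradiction. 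Neither of your proposed repairs (volume growth, promotion to a quasi-isometry) is needed; the additive bound was already available. You should also take a little care choosing basepoints in the source so that the limit map is defined on the whole cone, but that is routine.

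Once repaired, your route is genuinely different from the paper's. The paper stays entirely finitary: it replaces the discontinuous $\pi$ by a nearby continuous map $g$, radially projects $g$ to a continuous self-map $h_R$ of the sphere $S_R$ (which is still an almost isometry and still $5C$-close to the original metric $d$), observes that if $dd^*$ were unbounded then $h_{R_n}$ would miss a large ball and hence be non-surjective, and then invokes Borsuk--Ulam: a non-surjective continuous self-map of a sphere identifies some pair of antipodal points $y_1,y_2$ with common image $z$, and $d(y_1,z')<5C$, $d(y_2,z')<5C$, $d_X(y_1,y_2)=2R_n$ violate the triangle inequality once $R_n>5C$. Both proofs ultimately rest on a topological fact about $\mathbb R^n$ (Borsuk--Ulam versus invariance of domain); the paper's version is quantitative and avoids ultralimits, while yours, if written out, is arguably more conceptual (it isolates the statement that an almost-isometric self-embedding of $\mathbb R^n$ is coarsely onto) at the cost of the standard but nontrivial asymptotic-cone machinery.
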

\begin{proof}
Let $d\in\mathcal M(X)$, $[d]=s$. As $[d^*d]=\mathbf 1$, there exists $C>0$ such that $2d(x,X')<C$ for any $x\in X$. It suffices to show that there exists $D>0$ such that $2d(x',X)<D$ for any $x\in X$. Suppose the contrary: for any $n\in\mathbb N$ there exists $x_n\in X$ such that $d(x'_n,X)>2n$. Then $d(y',X)>n$ for any $y\in X$ such that $d_X(y,x_n)\leq n$.

As $2d(x,X')<C$ for any $x\in X$, there is a (not continuous) map $f:X\to X$ such that $d_X(x,f(x)')<C/2$ for any $x\in X$. This map satisfies 
$$
|d_X(f(x),f(y))-d_X(x,y)|<C \quad\mbox{for\ any\ } x,y\in X. 
$$
Then there exists a continuous map $g:X\to X$ such that $d_X(f(x),g(x))<C$ for any $x\in X$. This map satisfies 
$$
|d_X(g(x),g(y))-d_X(x,y)|<2C\quad\mbox{for\ any\ } x,y\in X, 
$$
and $d(x,g(x)')<3C/2$ for any $x\in X$.

Let $x_0=0$ denote the origin of $X$, and let $S_R$ be the sphere of radius $R$ centered at the origin, 
$$
S_R=\{x\in X:d_X(x_0,x)=R\}.
$$ 
Set $x'_0=f(x_0)'\in X'$. For $x\in S_R$, we have 
$$
|d_X(x_0,g(x))|=|d_X(x'_0,g(x)')-R|\leq 2C. 
$$
For $R>3C$, set $h_R(x)=\frac{g(x)}{\|g(x)\|}$. Then $h_R$ is a coninuous map from $S_R$ to $S_R$ for any $R>3C$, and we have 
$$
|d_X(h_R(x),h_R(y))-d_X(x,y)|<3C\quad\mbox{for\ any\ } x,y\in S_R, 
$$
and $d(x,h_R(x)')<5C$ for any $x\in X$.

Let $R_n=d_X(x'_0,x'_n)$. Then $x_n\in S_{R_n}$. If $d_X(y,x_n)\leq n$ then $d(y',X)>n$, hence $y\notin h_{R_n}(S_{R_n})$, so the map $h_{R_n}$ is not surjective. Then, by the Borsuk--Ulam Theorem, there exists a pair of antipodal points $y_1,y_2\in S_{R_n}$ such that $h_{R_n}(y_1)=h_{R_n}(y_2)=z$. As $d(y_i,z')<5C$, $i=1,2$, and $d_X(y_1,y_2)=2R_n$, the triangle inequality (for the trianlge with the vertices $y_1$, $y_2$, $z'$) is violated when $2R_n>10C$. This contradiction proves the claim.  

\end{proof}

\begin{cor}
Let $X=\mathbb Z^n$ with an $l_p$-metric, $1\leq p\leq\infty$, and let $s\in M(X)$. Then $s^*s=\mathbf 1$ implies $ss^*=\mathbf 1$.

\end{cor}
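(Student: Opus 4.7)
The plan is to reduce the $\mathbb Z^n$ case to the preceding theorem via the coarsely dense inclusion $\mathbb Z^n\hookrightarrow\mathbb R^n$. Given $s=[d]\in M(\mathbb Z^n)$ with $s^*s=\mathbf 1$, I would first extend the representative $d\in\mathcal M(\mathbb Z^n)$ to a metric $\tilde d$ on $\mathbb R^n\sqcup(\mathbb R^n)'$ by the standard formula
$$
\tilde d(x,y')=\inf_{u,v\in\mathbb Z^n}\bigl[d_{\mathbb R^n}(x,u)+d(u,v')+d_{\mathbb R^n}(v,y)\bigr],
$$
where $d_{\mathbb R^n}$ denotes the ambient $l_p$-metric. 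A direct check shows $\tilde d$ satisfies the mixed triangle inequalities, restricts to $d_{\mathbb R^n}$ on each copy of $\mathbb R^n$, agrees with $d$ on $\mathbb Z^n\sqcup(\mathbb Z^n)'$ (the upper bound by taking $u=x$, $v=y$; the lower bound from the triangle inequality for $d$), and satisfies $\inf_{x,y}\tilde d(x,y')\geq\inf_{u,v\in\mathbb Z^n}d(u,v')>0$, so $\tilde d\in\mathcal M(\mathbb R^n)$.

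Since $s^*s=\mathbf 1$, there exists $C>0$ with $2d(u,(\mathbb Z^n)')\leq C$ for every $u\in\mathbb Z^n$. Because $\mathbb Z^n$ is an $r$-net in $(\mathbb R^n,\|\cdot\|_p)$ for $r=n^{1/p}/2$ (interpreted as $1/2$ when $p=\infty$), every $x\in\mathbb R^n$ lies within $r$ of some $u_0\in\mathbb Z^n$, and then
$$
\tilde d(x,(\mathbb R^n)')\leq d_{\mathbb R^n}(x,u_0)+d(u_0,(\mathbb Z^n)')\leq r+C/2
$$
uniformly in $x$. Thus $[\tilde d^*\tilde d]=\mathbf 1$ in $M(\mathbb R^n)$, and the previous theorem applied to $\mathbb R^n$ with the $l_p$-norm yields $[\tilde d\tilde d^*]=\mathbf 1$, i.e.\ there exists $D>0$ with $\tilde d(x',\mathbb R^n)\leq D/2$ for every $x\in\mathbb R^n$.

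It remains to transfer this bound back to $\mathbb Z^n$. For $x\in\mathbb Z^n$, pick $y\in\mathbb R^n$ with $\tilde d(y,x')\leq D/2+1$ and then $u_0\in\mathbb Z^n$ with $d_{\mathbb R^n}(y,u_0)\leq r$; the triangle inequality for $\tilde d$ combined with $\tilde d|_{\mathbb Z^n\sqcup(\mathbb Z^n)'}=d$ gives $d(u_0,x')\leq r+D/2+1$, whence $d(x',\mathbb Z^n)\leq r+D/2+1$. Therefore $dd^*(x,x')=2d(x',\mathbb Z^n)$ is uniformly bounded, so $[dd^*]=\mathbf 1$ in $M(\mathbb Z^n)$.

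The main obstacle is the bookkeeping in the first step: one must verify that the inf-formula above genuinely defines a metric on $\mathbb R^n\sqcup(\mathbb R^n)'$ whose restriction to $\mathbb Z^n\sqcup(\mathbb Z^n)'$ is literally $d$ (and not merely coarsely equivalent to it), which is what lets the theorem of the previous section be applied cleanly. Once that extension is in hand, the remaining estimates are routine consequences of $\mathbb Z^n$ being a coarsely dense net in $(\mathbb R^n,\|\cdot\|_p)$.
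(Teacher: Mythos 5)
Your proof is correct, but it takes a more self-contained route than the paper, which disposes of the corollary in one line by citing Proposition 9.2 of the first paper in the series, asserting the identification $M(\mathbb Z^n)=M(\mathbb R^n)$ induced by the coarsely dense inclusion; once the two inverse semigroups are identified (preserving the unit and the involution), the implication $s^*s=\mathbf 1\Rightarrow ss^*=\mathbf 1$ transfers verbatim from the theorem for $\mathbb R^n$. What you do instead is reprove exactly the fragment of that identification that is needed: the extension $\tilde d$ of a representative $d$ by the inf-formula, the verification that $\tilde d$ restricts to $d$ on the integer lattice and lies in $\mathcal M(\mathbb R^n)$, and the two-way transfer of the boundedness of $d(x,X')$ and $d(x',X)$ using that $\mathbb Z^n$ is an $r$-net. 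All of these steps check out (in particular the mixed triangle inequality $d_{\mathbb R^n}(x_1,x_2)\leq\tilde d(x_1,y')+\tilde d(x_2,y')$ does follow by chaining through near-optimal $u_i,v_i\in\mathbb Z^n$, and the characterization $[d^*d]=\mathbf 1\Leftrightarrow\sup_x d(x,X')<\infty$ is the one the paper itself uses). The trade-off is that your argument is longer but independent of the earlier proposition, while the paper's is shorter but leans on a stronger external structural result; your version also makes explicit that only the weaker ``transfer of bounded displacement'' statement, not the full semigroup isomorphism, is required for this corollary.
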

\begin{proof}
By Proposition 9.2 of \cite{M}, $M(\mathbb Z^n)=M(\mathbb R^n)$.

\end{proof}

\section{$M(X)$ doesn't respect equivalences}

\begin{prop}
The inverse semigroup $M(X)$ is not a coarse invariant.

\end{prop}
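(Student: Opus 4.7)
The plan is to produce two explicitly coarsely equivalent metric spaces $X$ and $Y$ for which the inverse semigroups $M(X)$ and $M(Y)$ are non-isomorphic. A clean distinguishing invariant is weak finiteness of the unit, which is preserved by any inverse-semigroup isomorphism; the paper already supplies spaces on each side of this dichotomy, since Theorems \ref{example-amenable} and 3.2 give non-weakly-finite examples, while Theorem 4.1 together with its Corollary and Prop.~7.1 of \cite{M} give weakly-finite examples.

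I would fix $X$ with weakly finite $M(X)$ --- for instance $X = \mathbb{Z}$ with the standard metric, so that $M(X)$ is commutative --- and then look for a coarsely equivalent $Y$ admitting a bornologous injection $f \colon Y \to Y$ of the kind used in Theorem 3.2, namely one satisfying $|d_Y(f(u), f(v)) - d_Y(u, v)| < C$ and with $d_Y(y, f(Y))$ unbounded on some sequence of points in $Y$. Given such a pair $(Y, f)$, the metric
\[
d(x, y') = \inf\nolimits_{u \in Y} \bigl[ d_Y(x, u) + C + d_Y(f(u), y) \bigr]
\]
will produce $s = [d] \in M(Y)$ with $s^* s = \mathbf 1$ and $s s^* \neq \mathbf 1$ by the same triangle-inequality computation that drives the proof of Theorem 3.2, so $M(Y)$ fails to be weakly finite and therefore $M(Y) \not\cong M(X)$.

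The main obstacle is producing the pair $(Y, f)$ itself: if $Y$ is coarsely equivalent to $\mathbb{Z}$ through a map with uniformly bounded fibres, then $d_Y(y, f(Y))$ cannot exceed that fibre-diameter bound, which blocks the requirement above. One must therefore either allow fibres whose diameter grows but only slowly relative to the ambient distances --- in the logarithmic spirit of the construction underlying Theorem \ref{example-amenable} --- or else replace the baseline $X = \mathbb{Z}$ by a more flexible space, such as a bounded-geometry ultrametric on which an injection with large cokernel naturally lives and whose $M$ turns out to be commutative for an independent reason of the Prop.~7.1-of-\cite{M} flavour. Once the pair is fixed, verifying that $Y$ is coarsely equivalent to $X$ and that the metric $d$ above realises $[d d^*] \neq \mathbf 1$ reduces to routine triangle-inequality calculations of the same flavour as those performed in Section~3.
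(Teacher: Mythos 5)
Your high-level strategy is the same as the paper's: find two coarsely equivalent metrics whose inverse semigroups are separated by an isomorphism invariant (commutativity, or weak finiteness of the unit). But the proposal has a genuine gap, and one outright error. The error first: $M(\mathbb{Z})$ with the standard metric is \emph{not} commutative. Proposition 7.1 of \cite{M} applies to subsets of $[0,\infty)$, not to $\mathbb{Z}$; by Proposition 9.2 of \cite{M} one has $M(\mathbb{Z})=M(\mathbb{R})$, and the paper notes explicitly that $\mathbb{R}^n$ is not an R-space (it has the symmetry $x\mapsto -x$), so by Theorem \ref{R} its $M$ is non-commutative. ($M(\mathbb{Z})$ is still \emph{weakly finite} by the Corollary to Theorem 4.1, so weak finiteness could in principle serve as your invariant — but then you would be comparing it against a space that you have not produced.)

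That is the gap: the entire content of the proof is the construction of the pair of coarsely equivalent metrics, and you explicitly leave it as ``the main obstacle.'' Your instinct that bounded-distortion coarse equivalences to $\mathbb{Z}$ block the construction is correct, and the resolution is not to perturb $\mathbb{Z}$ at all. The paper instead reuses the space $X=\{x_n\}$ of Theorem \ref{example-amenable}, where $d_X(x_n,x_m)=\log(m+1)$ for $m>n$: because consecutive gaps tend to infinity, the \emph{identity map} on the underlying set is a coarse equivalence between $d_X$ and the metric $b_X(x_n,x_m)=|m^2-n^2|$ (one checks $d_X\le b_X\le 2e^{d_X}$). The metric $b_X$ realizes $X$ as a subset of $[0,\infty)$, so $M(X,b_X)$ is commutative, while $M(X,d_X)$ is non-commutative (indeed $[d^*d]\neq[dd^*]$ there). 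So the example lives on a sparse sequence of asymptotic dimension zero, not on anything coarsely equivalent to $\mathbb{Z}$; without producing such a concrete pair, your argument does not close.
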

\begin{proof}
The space $X$ from Theorem \ref{example-amenable} is coarsely equivalent to the space $Y=\{n^2:n\in\mathbb N\}$ with the standard metric, which we denote by $b_X$. Indeed, for $n<m$, we have $b_X(x_n,x_m)=m^2-n^2$ and $d_X(x_n,x_m)=\log(m+1)$. As $m^2-(m-1)^2=2m-1>\log(m+1)$ for $m>1$, we have $d_X(x,y)\leq b_X(x,y)$ for any $x,y\in X$, and taking $f(t)=2e^t$, we have $b_X(x,y)\leq f(d_X(x,y))$ for any $x,y\in X$.  

For the metric $d_X$ from Theorem \ref{example-amenable}, the inverse semigroup $M(X,d_X)$ is not commutative ($[d^*d]\neq [dd^*]$), while the inverse semigroup $M(X,b_X)$ is commutative by Prop. 7.1 of \cite{M2}.

\end{proof}

\begin{thm}
The inverse semigroup $M(X)$ is not a quasi-isometry invariant.

\end{thm}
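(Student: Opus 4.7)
The plan is to exhibit two metric spaces $X$ and $Y$ that are quasi-isometric but have non-isomorphic inverse semigroups $M(X)$ and $M(Y)$. The pair used in the preceding proposition does not suffice for this, because the log-spaced amenable space of Theorem~\ref{example-amenable} and the arithmetic progression $\{n^2:n\in\mathbb N\}$ have inter-point distances scaling like $\log(\max(n,m))$ versus $|n^2-m^2|$, so no affine bound relates them and the two spaces are merely coarsely equivalent, not quasi-isometric.

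My strategy is to keep the amenable space of Theorem~\ref{example-amenable} as one member of the pair, where the equality $[d^*d]=\mathbf 1\ne[dd^*]$ is witnessed by an explicit shift-type element. For the second member I would look for a space $Y$ that (i) has the same asymptotic dimension zero and the same exponential volume growth, so that a quasi-isometry is possible, and (ii) obstructs the existence of any analogous shift element in its inverse semigroup. The natural construction is a second log-scaled embedding $\{y_n\}\subset l^\infty$ of the index set~$\mathbb N$, obtained by re-permuting or re-rescaling the coordinates of $\{x_n\}$, chosen so that the pair-up map $x_n\mapsto x_{\lfloor n/2\rfloor}$ that underlies Theorem~\ref{example-amenable} ceases to be realizable as a coarse isometry of the new metric.

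The verification would then have two stages. First, one checks that the bijection $x_n\mapsto y_n$ is a quasi-isometry by a direct bi-Lipschitz estimate between the two ultrametrics. Second, one argues that $M(Y)\not\cong M(X)$, either by showing $Y$ is weakly finite through a rigidity argument modelled on Section~4 (in the spirit of the Borsuk--Ulam obstruction used there for $\mathbb R^n$), or by identifying some other element of $M(X)$ with no counterpart in $M(Y)$.

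The main obstacle is the tension between QI equivalence and the fine metric information that $M$ detects. Quasi-isometries tend to absorb precisely the kind of asymptotic geometric data to which $M(X)$ is sensitive, so finding a QI-equivalent second space with genuinely different $M$ requires pinpointing a combinatorial feature of $X$ which is not already a QI invariant and can be destroyed by a bi-Lipschitz rearrangement. Engineering this feature, and confirming that the rearranged space remains in the QI class of~$X$, is the technical heart of the construction.
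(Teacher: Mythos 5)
There is a genuine gap: your text is a research plan, not a proof. The decisive objects --- the second space $Y$, the quasi-isometry, and the argument that $M(Y)\not\cong M(X)$ --- are all deferred (``re-permuting or re-rescaling the coordinates \ldots chosen so that \ldots'', ``either by \ldots or by \ldots''), and you yourself flag the construction of the obstructing feature as the unresolved ``technical heart.'' Nothing in the proposal certifies that such a $Y$ exists, so no theorem has been proved. Your choice of starting point also works against you: for the space of Theorem~\ref{example-amenable} one has $d_X(x_n,x_m)=\log(\max(n,m)+1)$ for $n\neq m$, and any quasi-isometric metric on the same index set must be bi-Lipschitz comparable to this; in particular no such metric embeds quasi-isometrically into $[0,\infty)$ (take $m=n+1$: $\log(n+2)\to\infty$ while consecutive gaps of a real sequence with $|z_n|\asymp\log n$ tend to $0$). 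So the one class of spaces for which commutativity of $M$ is already known (subsets of a ray, Prop.~7.1 of \cite{M}) is unavailable as the second member of your pair, and you would be forced to prove weak finiteness or non-isomorphism for a bespoke $\ell^\infty$-type space from scratch --- precisely the step you leave open.

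The paper's proof runs in the opposite direction and is much cheaper. It fixes the index set $\mathbb N$ and compares two explicit metrics: $b_X(n,m)=|2^n-2^m|$, for which $M$ is commutative because $(X,b_X)$ is isometric to a subset of $[0,\infty)$, and $d_X(n,m)=|y_n-y_m|$ with $y_n=(-1)^{[\frac{n-1}{2}]}4^{[\frac n2]}$, which interlaces the points on both sides of the origin. Elementary estimates give $\frac37\,b_X\leq d_X\leq 12\,b_X$, so the two metrics are quasi-isometric; but the sign flip equips $(X,d_X)$ with a reflection $y\mapsto -y$ that is an almost isometry permuting the points, and the induced element $s\in M(X,d_X)$ with $s^2=\mathbf 1$ fails to commute with the idempotent supported on $\{y_n>0\}$ (one product is $\mathbf 0$, the other is not). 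In short: start from a space where commutativity is free, and destroy it by a bi-Lipschitz rearrangement that creates a symmetry --- rather than starting from a non-commutative space and trying to kill its structure, which is the hard and unexecuted half of your plan.
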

\begin{proof}
Let $X=\mathbb N$ be endowed with the metric $b_X$ given by $b_X(n,m)=|2^n-2^m|$, $n,m\in\mathbb N$, and let $y_n=s(n)4^{[\frac{n}{2}]}$, where $s(n)=(-1)^{[\frac{n-1}{2}]}$ and $[t]$ is the greatest integer not exceeding $t$. Let $d_X$ be the metric on $X$ given by $d_X(n,m)=|y_n-y_m|$, $n,m\in\mathbb N$. The two metrics are quasi-isometric. Indeed, suppose that $n>m$. 
If $s(n)=-s(m)$ then
$$
d_X(n,m)=4^{[\frac{n}{2}]}+4^{[\frac{m}{2}]}\leq 4^{\frac{n}{2}+1}+4^{\frac{m}{2}+1}=4(\cdot 2^n+2^m)\leq 12 b_X(n,m);
$$
$$
d_X(n,m)=4^{[\frac{n}{2}]}+4^{[\frac{m}{2}]}\geq 4^{\frac{n}{2}}+4^{\frac{m}{2}}\geq 2^n- 2^m= b_X(n,m).
$$   
We use here that $\frac{2^r+1}{2^r-1}\leq 3$ for any $r=n-m\in\mathbb N$. 
If $s(n)=s(m)$ then  
$$
d_X(n,m)=4^{[\frac{n}{2}]}-4^{[\frac{m}{2}]}\leq 4^{\frac{n}{2}+1}-4^{\frac{m}{2}}=4\cdot 2^n-2^m\leq 7 b_X(n,m).
$$
We use here that $\frac{4\cdot 2^r-1}{2^r-1}\leq 7$ for any $r=n-m\in\mathbb N$.
To obtain an estimate in other direction, note that $s(n)=s(m)$ implies that $[n/2]\geq [m/2]+1$, and that $n-m\neq 2$. 
If $n=m+1$ then 
$$
d_X(m+1,m)=3\cdot 4^{[m/2]}\geq \frac{3}{2}\cdot 2^m=\frac{3}{2}\ b_X(m+1,m), 
$$
If $n\geq m+3$ then
$$
d_X(n,m)=4^{[\frac{n}{2}]}-4^{[\frac{m}{2}]}\geq 4^{\frac{n}{2}}-4^{\frac{m}{2}+1}= 2^n-4\cdot 2^m\geq \frac{4}{7}\ b_X(n,m).
$$   
We use here that $\frac{2^r-4}{2^r-1}\geq \frac{4}{7}$ for any $r=n-m\geq 3$.
Thus, 
$$
\frac{3}{7}\ b_X(n,m)\leq d_X(n,m)\leq 12\cdot b_X(n,m)
$$ 
for any $n,m\in\mathbb N$, so the two metrics are quasi-isometric.

We already know that $M(X,b_X)$ is commutative, so it remains to expose two non-commuting elements in $M(X,d_X)$.

Let 
$$
X=\{(y_n,0):n\in\mathbb N\},\quad X'=\{(-y_n,1):n\in\mathbb N\}, 
$$
and let $d$ be the metric on $X\sqcup X'$ induced from the standard metric on the plane $\mathbb R^2$, $s=[d]$. Note that $-y_n=y_{n-1}$ if $y_n>0$ and $n>1$, and $-y_n=y_{n+1}$ if $y_n<0$. Hence, $d^*=d$ and $s^2=\mathbf 1$. 

Let 
$$
A_+=\{y_n:n\in\mathbb N; y_n>0\},\quad A_-=\{y_n:n\in\mathbb N; y_n<0\}, 
$$
$X=A_+\sqcup A_-$, and let the metrics $d_+$ and $d_-$ on $X\sqcup X'$ be given by
$$
d_\pm(n,m')=\inf\nolimits_{k\in A_\pm}[d_X(n,k)+1+d_X(k,m)],
$$
$e=[d_+]$, $f=[d_-]$. Then $es=\mathbf 0$, while $se=f$, i.e. $e$ and $s$ do not commute. 

\end{proof}

\part{When $M(X)$ is commutative}

\section{R-spaces}

\begin{defn}
A metric space $X$ is an {\it R-space} (R for rigid) if, for any $C>0$ and any two sequences $\{x_n\}_{n\in\mathbb N}$, $\{y_n\}_{n\in\mathbb N}$ of points in $X$ satisfying 
\begin{equation}\label{near_nm}
|d_X(x_n,x_m)-d_X(y_n,y_m)|<C\quad\mbox{for\ any\ }n,m\in\mathbb N
\end{equation}
there exists $D>0$ such that $d_X(x_n,y_n)<D$ for any $n\in\mathbb N$. 

\end{defn}

\begin{example}
As $M(X)$ is commutative for any $X\subset[0,\infty)$, it would follow from Theorem \ref{R} below that such $X$ is an R-space. A less trivial example is a planar spiral $X$ given by $r=e^\varphi$ in polar coordinates with the metric induced from the standard metric on the plane. Indeed, take any two sequences $\{x_n\}_{n\in\mathbb N}$, $\{y_n\}_{n\in\mathbb N}$, in $X$. Without loss of generality we may assume that $x_1=y_1=0$ is the origin. If these sequences satisfy (\ref{near_nm}) then 
$$
|d_X(0,x_n)-d_X(0,y_n)|<C
$$ 
for some fixed $C>0$ (we take $m=1$). If $x_n=(r_n,\varphi_n)$, $y_n=(s_n,\psi_n)$ then  $d_X(0,x_n)=r_n$, $d_X(0,y_n)=s_n$, and we have $|r_n-s_n|<C$. Then $x_n$ and $y_n$ lie in a ring of width $C$, say $R\leq r\leq R+C$. If $R$ is sufficiently great then 
$$
d_X(x_n,y_n)\leq (\log(R+C)-\log R)(R+C), 
$$
which is bounded as a function of $R$.

\end{example}

Recall that two maps $f,g:X\to X$ are equivalent if there exists $C>0$ such that $d_X(f(x),g(x))<C$ for any $x\in X$. A map $f:X\to X$ is an almost isometry if there exists $C>0$ such that 
$$
|d_X(f(x),f(y))-d_X(x,y)|<C
$$ 
for any $x,y\in X$ and if for any $y\in X$ there exists $x\in X$ such that $d_X(f(x),y)<C$ (the latter condition provides existence of an `inverse' map $g:X\to X$ such that $f\circ g$ and $g\circ f$ are equivalent to the identity map). The set $AI(X)$ of all equivalence classes of almost isometries of $X$ is a group with respect to the composition. A metric space $X$ is called AI-rigid \cite{KLS} if the group $AI(X)$ is trivial.

\begin{prop}
A countable R-space $X$ is AI-rigid.

\end{prop}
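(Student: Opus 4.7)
The strategy is almost immediate from the definitions, once we exploit countability to convert the R-space hypothesis (which is phrased for sequences) into a statement about all points of $X$ at once.

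First I would fix an almost isometry $f\colon X\to X$, with constant $C>0$, so that $|d_X(f(x),f(y))-d_X(x,y)|<C$ for all $x,y\in X$. The goal is to show $f$ is equivalent to the identity map, i.e.\ to produce $D>0$ with $d_X(x,f(x))<D$ for every $x\in X$; since every equivalence class of almost isometries is a single element of the group $AI(X)$, this will force $AI(X)$ to be trivial.

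Since $X$ is countable, enumerate $X=\{x_n\}_{n\in\mathbb N}$ and set $y_n=f(x_n)$. The almost-isometry inequality applied to pairs $(x_n,x_m)$ gives
\[
|d_X(y_n,y_m)-d_X(x_n,x_m)|<C\quad\text{for all }n,m\in\mathbb N,
\]
which is exactly the hypothesis (\ref{near_nm}) in the definition of R-space. Hence by the R-space property there exists $D>0$ with $d_X(x_n,y_n)<D$ for every $n$. Since the enumeration exhausts $X$, this means $d_X(x,f(x))<D$ for every $x\in X$, so $f$ is equivalent to the identity.

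There is essentially no obstacle here: the statement is a one-line consequence of the definitions, and countability is used only to enumerate $X$ as a single sequence so that the R-space condition (stated for sequences indexed by $\mathbb N$) can be applied. If anything, the only subtlety to flag is that the definition of almost isometry already includes the quasi-surjectivity clause, so no separate argument is needed to handle non-surjective $f$; the bound $d_X(x,f(x))<D$ alone witnesses triviality in $AI(X)$.
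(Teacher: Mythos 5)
Your proof is correct and follows exactly the paper's own argument: enumerate the countable space as a single sequence, apply the almost-isometry inequality to see that $\{x_n\}$ and $\{y_n\}=\{f(x_n)\}$ satisfy the R-space hypothesis, and conclude that $d_X(x_n,f(x_n))$ is uniformly bounded, so $f$ is equivalent to the identity. No further comment is needed.
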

\begin{proof}
Let $\{x_n\}_{n\in\mathbb N}$ be a sequence of all points of $X$, and let $f:X\to X$ be an almost isometry. Set $y_n=f(x_n)$. Then there exists $C>0$ such that 
$$
|d_X(f(x_n),f(x_m))-d_X(x_n,x_m)|<C
$$ 
for any $n,m\in\mathbb N$, hence there exists $D>0$ such that 
$$
d_X(x_n,f(x_n))=d_X(x_n,y_n)<D
$$ 
for any $n\in\mathbb N$, i.e. $f$ is equivalent to the identity map, hence $X$ is AI-rigid.  

\end{proof} 

\begin{example}
Euclidean spaces $\mathbb R^n$, $n\geq 1$, are not R-spaces, as they have a non-trivial symmetry. 
The Archimedean spiral $r=\varphi$ is not an R-space, as it is $\pi$-dense in $\mathbb R^2$.  

\end{example}

\section{Criterion of commutativity}

Let $a,b:T\to[0,\infty)$ be two functions on a set $T$. We say that $a\preceq b$ if there exists a monotonely increasing function $\varphi:[0,\infty)\to[0,\infty)$ with $\lim_{s\to\infty}\varphi(s)=\infty$ (we call such functions {\it reparametrizations}) such that $a(t)\leq\varphi(b(t))$ for any $t\in T$.

The following Lemma should be known, but we could not find a reference.
\begin{lem}
Let $a,b:T\to[0,\infty)$ be two functions. If $a\preceq b$ is not true then there exists $C>0$ and a sequence $(t_n)_{n\in\mathbb N}$ of points in $T$ such that $b(t_n)<C$ for any $n\in\mathbb N$ and $\lim_{n\to\infty}a(t_n)=\infty$. 

\end{lem}
\begin{proof}
If $a\preceq b$ is not true then for any reparametrization $\varphi$ there exists $t\in T$ such that $a(t)> \varphi(b(t))$. Suppose that for any $C>0$, the value $\max\{a(t):b(t)\leq C\}$ is finite. Then set 
$$
f(C)=\max(\max\{a(t):b(t)\leq C\},C). 
$$
This gives a reparametrization $f$. If $b(t)=C$ then $a(t)\leq f(C)=f(b(t))$ --- a contradiction. Thus, there exists $C>0$ such that $\max\{a(t):b(t)\leq C\}=\infty$. It remains to choose a sequence $(t_n)_{n\in\mathbb N}$ in the set $\{t\in T:b(t)\leq C\}$ such that $a(t_n)>n$.

\end{proof}

\begin{thm}\label{R}
$X$ is an R-space if and only if $M(X)$ is commutative.

\end{thm}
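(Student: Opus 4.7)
The plan is to prove both implications.

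For the forward direction ($X$ R-space $\Rightarrow$ $M(X)$ commutative), given $d, e \in \mathcal{M}(X)$ I will show that the compositions $d \circ e$ and $e \circ d$ are coarsely equivalent. By the preceding Lemma applied to the functions $(x,z) \mapsto (d \circ e)(x,z')$ and $(x,z) \mapsto (e \circ d)(x,z')$ on $X \times X$, it suffices to prove $(d \circ e) \preceq (e \circ d)$, the opposite direction being symmetric. Suppose otherwise: there are $C > 0$ and sequences $(x_n), (z_n)$ in $X$ with $(e \circ d)(x_n, z_n') \leq C$ and $(d \circ e)(x_n, z_n') \to \infty$. I extract witnesses $y_n$ satisfying $d(x_n, y_n') \leq C$ and $e(y_n, z_n') \leq C$. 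Routing through $y_n', y_m'$ in the metric $d$ on $X \sqcup X'$ gives
$$
d_X(y_n, y_m) \leq d(x_n, y_n') + d_X(x_n, x_m) + d(x_m, y_m') \leq d_X(x_n, x_m) + 2C,
$$
and the reverse bound is symmetric; analogously for $e$, $|d_X(y_n, y_m) - d_X(z_n, z_m)| \leq 2C$. The R-space property then furnishes $D > 0$ with $d_X(x_n, y_n), d_X(y_n, z_n) \leq D$ for all $n$. Taking $y = y_n$ in the infimum defining $(d \circ e)(x_n, z_n')$ and bounding $e(x_n, y_n')$, $d(y_n, z_n')$ via further mixed triangle inequalities (routing through $z_n$ and $x_n$ respectively), each is at most $2D + C$, so $(d \circ e)(x_n, z_n') \leq 4D + 2C$, contradicting the assumed divergence.

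For the converse, I argue by contrapositive. Let $C > 0$ and sequences $(x_n), (y_n)$ witness the failure of the R-property, set $r_n := d_X(x_n, y_n) \to \infty$, and pass to a subsequence (relabeled) with $r_{n+1} \geq 2 r_n$. Writing $B = d_X(x_m, x_n)$ for indices $n \neq m$, four elementary triangle inequalities in $X$ combined with $|d_X(y_n, y_m) - d_X(x_n, x_m)| < C$ yield
$$
d_X(y_m, x_n) \geq \max\bigl(|B - r_m|,\; |B - r_n| - C\bigr) \geq \tfrac{1}{2}\bigl(|r_m - r_n| - C\bigr).
$$
Under the exponential growth of $r_n$, the gap $|r_m - r_n|$ is at least $r_n/2$ for every $m \neq n$, so $d_X(y_m, x_n) \geq r_n/4 - C/2$; together with $d_X(y_n, x_n) = r_n$ this shows $\inf_m d_X(y_m, x_n) \to \infty$ as $n \to \infty$.

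Now I define
$$
d(x, y') := \inf_{n \in \mathbb N}\bigl[ d_X(x, x_n) + (C + 1) + d_X(y_n, y)\bigr].
$$
Each of the four mixed triangle inequalities for a metric on $X \sqcup X'$ reduces (after picking near-minimizing indices in the two infima) to the hypothesis $d_X(y_n, y_m) \geq d_X(x_n, x_m) - C$, with the additive slack $C + 1$ absorbing the loss, so $d \in \mathcal M(X)$. Direct minimization (take $y = y_n$ and then the $n$-th summand) gives $d(x_n, X') \leq C + 1$, while minimizing $d(y, x_n') = \inf_m[d_X(y, x_m) + (C+1) + d_X(y_m, x_n)]$ first over $y$ via $y = x_m$ yields $d(X, x_n') = (C + 1) + \inf_m d_X(y_m, x_n)$. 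Hence $(d^* d)(x_n, x_n') = 2\,d(x_n, X') \leq 2(C + 1)$ is bounded while $(d d^*)(x_n, x_n') = 2\,d(X, x_n') \to \infty$, so $[d d^*] \neq [d^* d]$ in $M(X)$, i.e. $[d] \cdot [d^*] \neq [d^*] \cdot [d]$, and $M(X)$ is not commutative. The main obstacle is the exponential-gap computation bounding $d_X(y_m, x_n)$ from below in the converse, together with checking that the candidate $d$ satisfies all four mixed triangle inequalities; the forward direction is bookkeeping-heavy but mechanical once one identifies $y = y_n$ as the witness inside the infimum defining $d \circ e$.
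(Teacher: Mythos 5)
Your proof is correct, and it splits into one half that mirrors the paper and one half that takes a genuinely different route. The converse is essentially the paper's argument: your metric $d$ is the paper's $d_1$ (with slack $C+1$ in place of $C$), the paper's second metric $d_2$ is exactly your $d^*$, and both arguments conclude by showing $(d^*d)(x_n,x_n')$ stays bounded while $(dd^*)(x_n,x_n')\to\infty$. The only difference is how the lower bound $\inf_m d_X(y_m,x_n)\to\infty$ is obtained: the paper first shows the sequences are unbounded and extracts a diagonal subsequence with $d_X(x_k,y_n)>k$ for all $k,n$, whereas your exponential-gap subsequence $r_{n+1}\geq 2r_n$ combined with the estimate $d_X(y_m,x_n)\geq\max\bigl(|B-r_m|,|B-r_n|-C\bigr)\geq\tfrac12\bigl(|r_m-r_n|-C\bigr)$ gets the same conclusion without ever needing $(x_n)$ itself to escape to infinity --- a mild simplification. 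The forward direction is where you diverge: the paper proves the stronger structural statement that every element of $M(X)$ is a selfadjoint idempotent (first selfadjointness, then idempotency via the criterion $b\preceq a$ quoted from the first paper of the series), and deduces commutativity from the fact that idempotents in an inverse semigroup commute; you instead prove commutativity directly, extracting a witness $y_n$ from a bounded value of $(e\circ d)(x_n,z_n')$, transporting condition (\ref{near_nm}) through both metrics to apply the R-property twice, and reinserting $y=y_n$ into the infimum defining $(d\circ e)(x_n,z_n')$ to get the bound $4D+2C$. Your route is shorter and does not depend on the external idempotency criterion, but it yields only commutativity, whereas the paper's argument additionally shows that $M(X)$ coincides with its semilattice of idempotents, which is the sharper fact. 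Both arguments are complete; the only cosmetic imprecision on your side is the remark that all four triangle inequalities for $d$ reduce to $d_X(y_n,y_m)\geq d_X(x_n,x_m)-C$ --- the inequality $d_X(y_1,y_2)\leq d(x,y_1')+d(x,y_2')$ needs the opposite bound $d_X(y_n,y_m)\leq d_X(x_n,x_m)+C$, which (\ref{near_nm}) of course also supplies.
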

\begin{proof}
Let $X$ be an R-space. We shall show that any $s\in M(X)$ is a projection. It would follow that $M(X)$ is commutative.
First, we shall show that any $s\in M(X)$ is selfadjoint. Let $d\in\mathcal M(X)$, $[d]=s$. Suppose that $[d^*]\neq[d]$. This means that either $d^*\preceq d$ or $d\preceq d^*$ is not true, where $d$ and $d^*$ are considered as functions on $T=X\times X'$. Without loss of generality we may assume that $d^*\preceq d$ is not true. Then there exist sequences $(x_n)_{n\in\mathbb N}$ in $X$ and $(y')_{n\in\mathbb N}$ in $X'$ and $L>0$ such that $d(x_n,y'_n)<L$ for any $n\in\mathbb N$ and $\lim_{n\to\infty}d(y_n,x'_n)=\infty$ (recall that $d^*(x,y'):=d(y,x')$).

Take $n,m\in\mathbb N$.
Since $d(x_n,y'_n)<L$, $d(x_m,y'_m)<L$, we have 
$$
|d_X(x_n,x_m)-d_X(y_n,y_m)|=|d_X(x_n,x_m)-d_X(y'_n,y'_m)|<2L,
$$
and, since $X$ is an R-space, there exists $D>0$ such that $d_X(x_n,y_n)<D$ for any $n\in\mathbb N$. 

Then, using the triangle inequality for the quadrangle $x_ny_nx'_ny'_n$, we get
\begin{eqnarray*}
d(y_n,x'_n)&\leq& d_X(y_n,x_n)+d(x_n,y'_n)+d_X(y'_n,x'_n)\\
&=&d_X(y_n,x_n)+d(x_n,y'_n)+d_X(y_n,x_n)\\
&<&D+L+D,
\end{eqnarray*}
which contradicts the condition $\lim_{n\to\infty}d(y_n,x'_n)=\infty$.

Now, let us show that $[d]\in M(X)$ is idempotent if $X$ is an R-space. Let $a(x)=d(x,X')$, $b(x)=d(x,x')$. It was shown in \cite{M} (Theorem 3.1 and remark at the end of Section 11) that if $[d]$ is selfadjoint then it is idempotent if and only if 
$b\preceq a$. Suppose that the latter is not true. Then there exists $L>0$ and a sequence $\{x_n\}_{n\in\mathbb N}$ of points in $X$ such that $d(x_n,X')<L$ for any $n\in\mathbb N$ and $\lim_{n\to\infty}d(x_n,x'_n)=\infty$. In particular, this means that there exists a sequence $\{y_n\}_{n\in\mathbb N}$ of points in $X$ such that $d(x_n,y'_n)<L$ for any $n\in\mathbb N$. Since $[d]$ is selfadjoint, for any $L>0$ there exists $R>0$ such that if $d(x,y')<L$ then $d(x',y)<R$. 

It follows from the triangle inequality for the quadrangle $x_nx_my'_ny'_m$ that 
\begin{eqnarray*}
|d_X(x_n,x_m)-d_X(y_n,y_m)|&=&|d_X(x_n,x_m)-d_X(y'_n,y'_m)|\\&\leq& d(x_n,y'_n)+d(x_m,y'_m)\ <\ 2L
\end{eqnarray*}
for any $n,m\in\mathbb N$,
hence, the property of being an R-space implies that there exists $D>0$ such that $d_X(x_n,y_n)<D$ for any $n\in\mathbb N$. Therefore,
$$
d(x_n,x'_n)\leq d_X(x_n,y_n)+d(y_n,x'_n)<D+R
$$ 
for any $n\in\mathbb N$ --- a contradiction with $\lim_{n\to\infty}d(x_n,x'_n)=\infty$.

In the opposite direction, suppose that $X$ is not an R-space. i.e. that there exists $C>0$ and sequences $\{x_n\}_{n\in\mathbb N}$, $\{y_n\}_{n\in\mathbb N}$ of points in $X$ such that (\ref{near_nm}) holds and $\lim_{n\to\infty}d_X(x_n,y_n)=\infty$. 

Note that these sequences cannot be bounded. Indeed, if there exists $R>0$ such that $d_X(x_1,x_n)<R$ for any $n\in\mathbb N$ then 
$$
d_X(y_1,y_n)\leq d_X(x_1,x_n)+C=R+C
$$ 
for any $n\in\mathbb N$, but then 
$$
d_X(x_n,y_n)\leq d_X(x_n,x_1)+d_X(x_1,y_1)+d_X(y_1,y_n)< R+d_X(x_1,y_1)+R+C, 
$$
which contradicts $\lim_{n\to\infty}d_X(x_n,y_n)=\infty$. Passing to a subsequence, we may assume that 
$$
d_X(x_k,x_n)>k,\ d_X(x_k,y_n)>k,\ d_X(y_k,x_n)>k,\ d_X(y_k,y_n)>k
$$ for any $n<k$, and $d_X(x_k,y_k)>k$ for any $k\in\mathbb N$. In particular, this means that
\begin{equation}\label{knm}
d_X(x_k,y_n)>k\quad\mbox{for\ any\ }k,n\in\mathbb N.
\end{equation}
 
Let us define two metrics on $X$ and show that they don't commute.
For $x,y\in X$ set 
$$
d_1(x,y')=\min\nolimits_{n\in\mathbb N}[d_X(x,x_n)+C+d_X(y_n,y)]; 
$$ 
$$
d_2(x,y')=\min\nolimits_{n\in\mathbb N}[d_X(x,y_n)+C+d_X(x_n,y)]
$$
(it is clear that the minimum is attained on some $n\in\mathbb N$ as $x_n,y_n\to\infty$).
Let us show that $d_1$ is a metric on $X\sqcup X'$ (the case of $d_2$ is similar).

Due to symmetry, it suffices to check the two triangle inequalities for the triangle $xzy'$, $z\in X$: 
\begin{eqnarray*}
d_1(x,y')+d_1(z,y')&=&\min\nolimits_{n\in\mathbb N}[d_X(x,x_n)+C+d_X(y_n,y)]\\
&&+\ \min\nolimits_{m\in\mathbb N}[d_X(z,x_m)+C+d_X(y_m,y)]\\
&=&d_X(x,x_{n_x})+d_X(y_{n_x},y)+d_X(y,y_{n_z})+d_X(z,x_{n_z})+2C\\
&\geq&d_X(x,x_{n_x})+d_X(y_{n_x},y_{n_z})+d_X(z,x_{n_z})+2C\\
&\geq&d_X(x,x_{n_x})+(d_X(x_{n_x},x_{n_z})-C)+d_X(z,x_{n_z})+2C\\
&=&d_X(x,x_{n_x})+d_X(x_{n_x},x_{n_z}+d_X(z,x_{n_z})+C\\
&\geq&d_X(x,z)+C\ \geq\ d_X(x,z).
\end{eqnarray*}
and
\begin{eqnarray*}
d_1(x,y')&=&\min\nolimits_{n\in\mathbb N}[d_X(x,x_n)+C+d_X(y_n,y)]\\
&\leq&d_X(x,x_{n_z})+d_X(y_{n_z},y)+C\\
&\leq&d_X(x,z)+d_X(z,x_{n_z})+d_X(y_{n_z},y)+C\\
&=&d_X(x,z)+d_1(z,y').
\end{eqnarray*}

Let us evaluate $(d_2\circ d_1)(x_k,x'_k)$ and $(d_1\circ d_2)(x_k,x'_k)$. 

Taking fixed values $n=m=k$, $u=y_k$, we get
\begin{eqnarray*}
(d_2\circ d_1)(x_k,x'_k)&=&\inf\nolimits_{u\in X}\{\min\nolimits_{n\in\mathbb N}[d_X(x_k,x_n)+C+d_X(y_n,u)]\\
&&+\ \min\nolimits_{m\in\mathbb N}[d_X(u,y_m)+C+d_X(x_m,x_k)]\}\\
&\leq&\inf\nolimits_{u\in X}\{[d_X(x_k,x_k)+C+d_X(y_k,u)]\\
&&+\ [d_X(u,y_k)+C+d_X(x_k,x_k)]\}\\
&=&[d_X(x_k,x_k)+C]+[C+d_X(x_k,x_k)]\ =\ 2C.
\end{eqnarray*}

Using the triangle inequality for the triangle $x_nx_mu$ and (\ref{knm}), we get
\begin{eqnarray*}
(d_1\circ d_2)(x_k,x'_k)&=&\inf\nolimits_{u\in X}\{\min\nolimits_{n\in\mathbb N}[d_X(x_k,y_n)+C+d_X(x_n,u)]\\
&&+\ \min\nolimits_{m\in\mathbb N}[d_X(u,x_m)+C+d_X(y_m,x_k)]\}\\
&\geq&\inf\nolimits_{u\in X}\{\min\nolimits_{n\in\mathbb N}[d_X(x_k,y_n)+d_X(x_n,u)]\\
&&+\ \min\nolimits_{m\in\mathbb N}[d_X(u,x_m)+d_X(y_m,x_k)]\}\\
&\geq&
\min\nolimits_{n,m\in\mathbb N}[d_X(x_k,y_n)+d_X(x_n,x_m)+d_X(y_m,x_k)]\\
&>&k+d_X(x_n,x_m)+k\ >\ 2k.
\end{eqnarray*}

Thus, for the sequence $\{x_k\}_{k\in\mathbb N}$ of points in $X$, the distances $(d_2\circ d_1)(x_k,x'_k)$ are uniformly bounded, while $\lim_{k\to\infty}(d_1\circ d_2)(x_k,x'_k)=\infty$, hence the metrics $(d_2\circ d_1)$ and $d_1\circ d_2$ are not equivalent, i.e. $[d_2][d_1]\neq [d_1][d_2]$. 

\end{proof}

\end{document}